\newtheorem{theorem}{Theorem}[section]
\newtheorem{assumption}[theorem]{Assumption}
\newtheorem{definition}[theorem]{Definition}
\newtheorem{lemma}[theorem]{Lemma}
\newtheorem{remark}[theorem]{Remark}
 \newenvironment{proof}[1][Proof]{\textbf{#1.} }{\ \rule{0.5em}{0.5em}}
\numberwithin{equation}{section}
\newcommand{\norm}[1]{\left\lVert#1\right\rVert}
\def\R{\mathbb{R}}
\def\C{\mathbb{C}}
\def\N{\mathbb{N}}
\def\Re{{\rm Re}\,}
\begin{document}
		\title{\textbf{A short proof for Hopf bifurcation in Gurtin-MacCamy's population dynamics model}}
	\author{\textsc{Arnaud Ducrot$^{(a)}$ Hao Kang$^{(a)}$ and Pierre Magal$^{(b)}$\thanks{Corresponding author. e-mail: \href{mailto:pierre.magal@u-bordeaux.fr}{pierre.magal@u-bordeaux.fr}}  }\\
		{\small \textit{$^{(a)}$Normandie Univ, UNIHAVRE, LMAH, FR-CNRS-3335, ISCN, 76600 Le Havre, France}},\\  
		{\small \textit{$^{(b)}$Univ. Bordeaux, IMB, UMR 5251, F-33400 Talence, France.}} \\
		{\small \textit{CNRS, IMB, UMR 5251, F-33400 Talence, France.}}
	}
	\maketitle
	
%
%
%
%
%
%
%
%
	
	
	
	\begin{abstract}
	In this paper, we provide a short proof for the Hopf bifurcation theorem in the Gurtin-MacCamy's population dynamics model. Here we use the  Crandall and Rabinowitz's approach, based on the implicit function theorem. Compared with previous methods, here we require the age-specific birth rate to be slightly smoother (roughly of bounded variation), but we have a huge gain for the length of the proof.
%
	\end{abstract}
	
	\maketitle
	
	


\section{Age-Structured Models}
We consider the existence of periodic solutions for the following equation,
\begin{equation}\label{1.1}
\begin{cases}
(\partial_t+\partial_a)u(t, a)=-m \, u(t, a), \quad a\in(0, +\infty),\\
u(t, 0)=f\left(\nu, \int_0^\infty b(a)u(t, a)da\right),\\
u(0, \cdot)=\psi\in L^1_+((0, \infty), \mathbb{R}),
\end{cases}
\end{equation}
where $m >0$ and $b \in L^\infty_+(0, \infty)$ are mortality rate and fertility rate of the population respectively, the nonlinear function $f \in C^2 \left(\mathbb{R}\times\mathbb{R}, \mathbb{R} \right) $ describes the birth limitations for the population while $\nu\in \mathbb{R}$ is regarded as a bifurcation parameter. Here $u(t, a)$ denotes the density of a population at time $t$ with age $a$. This equation is referred as the \textit{Gurtin-MacCamy's age-structured equation} and was introduced in its nonlinear form by Gurtin and MacCamy in \cite{gurtin1974non} to study temporal evolution of biological populations.


The existence of nontrivial periodic solution induced by Hopf bifurcation has been observed in various specific age-structured models (Cushing \cite{cushing1983bifurcation, cushing1998an}, Pr{\"u}ss \cite{pruss1983qualitative}, Swart \cite{swart1988hopf}, Kostava and Li \cite{kostova1996oscillations}, Bertoni \cite{bertoni1998periodic}, Magal and Ruan \cite{magal2009center}). In this paper we shall use the implicit function theorem to establish the Hopf bifurcation theorem that is used to obtain the existence of nontrivial periodic solutions of the age-structured model \eqref{1.1}, that is, a nontrivial periodic solution bifurcated from the equilibrium of \eqref{1.1} when the bifurcation parameter $\nu$ takes some critical values. 

For two dimensional ordinary differential equations Crandall and Rabinowitz \cite{crandall1977hopf} requires less regularity ($C^2$-right hand side) than the standard result by Hale and Kocack \cite{Hale-Kocak} (which requires $C^3$-right hand side)  and Hassard, Kazarinoff and Wan \cite{Hassard-Kazarinoff-Wan} (which requires $C^4$-right hand side). Here we assume that the function $f$ is only $C^2$ which corresponds to the regularity imposed by Crandall and Rabinowitz \cite{crandall1977hopf} for their result applied to ordinary differential equations. Such a regularity assumption has been mentioned already in Liu, Magal and Ruan \cite[see Remark 2.5]{LMR11}. In the context of partial differential equations Crandall and Rabinowitz \cite{crandall1977hopf} original theorem only applies to parabolic PDE since their proof strongly uses the fact that the semigroup is generated by a sectorial operator $A:D(A) \subset X \to X $ on a Banach space $X$ and verifies that the map  
$t \to e^{A t} x$ (with $x \in X$) 
is of class $C^1$ on $(0, \infty)$. Here we are working with hyperbolic operator, therefore such a property is not satisfied. Nevertheless by exploiting the special structure of the system, and imposing some extra regularity for the birth function $a \to b(a)$ (i.e. $b$ has bounded variation on $[0, \infty)$) we are still able to apply Crandall and Rabinowitz's ideas.  

Note that the previous result by Magal and Ruan \cite{magal2009center} and Liu, Magal and Ruan \cite{LMR11} only assume that the birth function $b$ belongs to $L^\infty(0,\infty)$. Here the fact that $a \to b(a)$ has bounded variation on each $[0, \infty)$, means that  $a \to b(a)$ has a finite number of discontinuity point on each bounded interval on $[0, \infty)$ and that $b(a)$ is continuous in between two successive discontinuity points. Such an assumption is sufficient for most practical  examples. Finally this paper is closely related to the work of Cushing  \cite{cushing1983bifurcation} in which he considered an equation with age and delay at birth. In \cite{cushing1983bifurcation},  the function $a \to b(a)$ is assumed to be of class $C^1$ which is stronger than our bounded variation assumption. 

This paper is organized as follows. In Section \ref{section2}, we give the well-posedness result of \eqref{1.1}. In Section \ref{section3}, we provide the assumptions for our Hopf bifurcation theorem, while Section \ref{section4} is devoted to state and prove the Hopf bifurcation theorem.

\section{Well-posedness}\label{section2}

Set 
\begin{equation*}
X=\mathbb{R}\times L^1((0, \infty), \mathbb{R}) \text{ and } X_0=\left\{ 0_\R  \right\}\times L^1((0, \infty), \mathbb{R}).
\end{equation*}
Assume that $X$ is endowed with the product norm 
$$ 
\norm{x}=|\alpha|+\norm{\psi}_{L^1((0, \infty), \mathbb{R})}, \quad \forall x=\begin{pmatrix}
	\alpha\\\psi
\end{pmatrix}\in X. 
$$
Consider the linear operator $A: D(A)\subset X\to X$ given by
\begin{equation*}
	A\begin{pmatrix}
		0\\\psi
	\end{pmatrix}=\begin{pmatrix}
		-\psi(0)\\-\psi'- m \psi
	\end{pmatrix}
\end{equation*} 
with \begin{equation*}
	D(A)=\{0_\R\}\times W^{1, 1}((0, \infty), \mathbb{R}).
\end{equation*} 
Recall that $X_0$ is the closure of $D(A)$ in $X$. In addition, note that $A_0$, the part of $A$ in $X_0$, generates a $C_0-$semigroup of bounded linear operators, denoted by $\{T_{A_0}(t)\}_{t\ge0}$ and explicitly given by 
$$ 
T_{A_0}(t)\begin{pmatrix}
	0\\\psi
\end{pmatrix}=\begin{pmatrix}
	0\\ \widehat{T}_{A_0}(t)\psi
\end{pmatrix}, 
$$
wherein we have set, for all $t\geq 0$ and $\psi\in L^1((0,\infty),\mathbb{R})$,
$$ 
\widehat{T}_{A_0}(t)(\psi)(a)=\begin{cases}
	e^{-m\, t}\psi(a-t), \quad&\text{if}\quad a\ge t,\\
	0, \quad&\text{if}\quad a\le t. 
\end{cases} 
$$
Moreover $A$ generates an integrated semigroup of $X$, denoted by $\{S_A(t)\}_{t\ge0}$ and defined, for $t\geq 0$ by
$$ 
S_A(t)\begin{pmatrix}
	\alpha\\\psi
\end{pmatrix}=\begin{pmatrix}
	0\\L(t)\alpha+\int_0^t\widehat{T}_{A_0}(s)\psi ds
\end{pmatrix}, \;\forall \begin{pmatrix}
	\alpha\\\psi
\end{pmatrix}\in X,
$$
where  
$$ 
L(t)(\alpha)(a)=\begin{cases}
	0,\quad &\text{if}\quad a\ge t,\\
	e^{-m\, a}\alpha, \quad&\text{if}\quad a\le t.
\end{cases} 
$$
Define the map $H:\R \times  X_0\to X$ by
\begin{equation*}
	H \begin{pmatrix}
		\nu,  \begin{pmatrix}
		0\\\psi
	\end{pmatrix}
\end{pmatrix}
=\begin{pmatrix}
		f\left(\nu, \int_0^\infty b(a)\psi(a)da\right)\\0
	\end{pmatrix}.
\end{equation*} 
Then by identifying $u(t)$ with $v(t)=\begin{pmatrix}
	0\\u(t)
\end{pmatrix}$, problem \eqref{1.1} can be rewritten as the following abstract Cauchy problem
\begin{equation}\label{2.1}
	\frac{dv(t)}{dt}=Av(t)+H(\nu, v(t)),\quad t\ge0,\quad v(0)=y\in X_0. 
\end{equation}
Since $f$ is Lipschitz continuous on bounded sets, the general results proved in Magal and Ruan \cite[see section 5]{MR2007} or \cite[Chapter 5]{magal2018theory} ensure that the Cauchy problem \eqref{2.1} generated a maximal semiflow (with eventually some blowup),
denoted below by $\{U_\nu(t)\}_{t\ge0}$. 

\section{Assumptions}\label{section3}

\noindent \textbf{Regularity of the birth function:}  Let us recall some definition and properties of the so called bounded variation functions. Let $F: [0, \infty)\to\mathbb{R}$ be some function. For each $a>0$ define
$$ 
T_F(a):=\sup\big\{\sum_{j=1}^n|F(a_j)-F(a_{j-1})|: n\in\mathbb{N}, 0=a_0<\cdots<a_n=a\big\}\in[0, \infty], 
$$
where the supremum is taken over all finite strictly increasing sequences in $[0, a]$. 
\begin{definition}
	A function $F: [0, \infty)\to\mathbb{R}$ is said to be of bounded variation on $[0, \infty)$ if 
	$$ 
	\sup\limits_{a>0}T_F(a)<\infty. 
	$$
	In that case the function $a\to T_F(a)$ is bounded and increasing on $[0, \infty)$.
\end{definition}
Let $F: [0, \infty)\to\mathbb{R}$ be a right continuous function of bounded variation on $[0, \infty)$, then $a\to T_F(a)$ is also right continuous and according to Folland \cite[Theorem 3.29]{Folland1999}, there exists a unique Borel measure $\mu_F$ such that
$$ 
\mu_F((a, b])=F(b)-F(a),\text{ for all } a, b \in [0, \infty), \text{ with } a<b. 
$$
Furthermore, its total variation $|\mu_F|$ is the positive and bounded Borel measure associated to the right continuous and increasing function $a\to T_F(a)$. 

Next let us recall the integration by parts formula proved by Folland \cite[Theorem 3.36]{Folland1999} as well. If $G: [0, \infty)\to\mathbb{R}$ is of class $C^1$ then for all $0\le a<b$, one has
\begin{equation} \label{3.1}
	\int_a^bG(s)\mu_F(ds)=[G(b)F(b-)-G(a)F(a+)]-\int_a^bG'(s)F(s)ds.
\end{equation}

We now make a set of assumptions on the fertility rate $b$.
\begin{assumption}\label{ASS3.2}
        Assume that the function $\chi(a):=b(a) e^{-m \, a}$ satisfies the two following properties  
		\begin{itemize}
			\item [{\rm (i)}] 	Assume that $\chi\in L^1(0, \infty)$ with 
			$$
			\int_0^\infty \chi (a)da=1 \Leftrightarrow 	\int_0^\infty b(a) e^{-m\,a} da=1 . 
			$$
			\item [{\rm (ii)}]  We assume that the function $\tau: a\to a\chi(a)$ is right continuous and of bounded variation on $[0, \infty)$. We let $\mu_\tau$ be the unique Borel measure associated to $\tau$.
		\end{itemize}	
\end{assumption}
\begin{remark}
Note that when $b\in L^\infty(0,\infty)$, for each integer $n \in \N$ one has 
	$$
a \to 	a^n \chi(a)=a^n b(a)e^{-m \, a} \in L^1((0,\infty),\mathbb{R}).
	$$ 
\end{remark}
\noindent \textbf{Equilibria: } Recall that $\begin{pmatrix}
0\\\bar{u}
\end{pmatrix}\in D(A)$ is an equilibrium of the semiflow $\{U_\nu(t)\}_{t\ge0}$ if and only if 
$$ 
\begin{pmatrix}
	0\\\bar{u}
\end{pmatrix} \in D(A)\quad\text{and}\quad A\begin{pmatrix}
0\\\bar{u}
\end{pmatrix} +
H \begin{pmatrix}
	\nu,  \begin{pmatrix}
		0\\
		\bar{u}
	\end{pmatrix}
\end{pmatrix} =0. 
$$
As a consequence a positive equilibrium is given by 
$$
 \bar{u}(a)=\overline{w}_\nu e^{-m\, a},\;a\geq 0 
$$
where $\overline{w}_\nu>0$ becomes a solution of the following equation 
$$ 
\overline{w}_\nu=f\left(\nu, \overline{w}_\nu \right). 
$$
Our next assumption is concerned with the existence of such equilibrium point and its regularity with respect to the parameter $\nu$.
\begin{assumption}\label{ASS3.4}
	We assume that there exists an open interval $I$ such that for each $\nu\in I$ there exists a constant solution $\overline{w}_\nu \in \R$ of the equation
	$$
	\overline{w}_\nu=f(\nu, \overline{w}_\nu). 
	$$
	We assume further that the map $\nu \to \overline{w}_\nu$ is continuously differentiable on the interval $I$. \\
	In the sequel we set 
$$ 
\bar{v}_\nu=\begin{pmatrix}
	0\\\bar{u}_\nu
\end{pmatrix}\quad\text{with}\quad\bar{u}_\nu(a)=\overline{w}_\nu e^{-m\,a},\quad \forall\nu\in I. 
$$
\end{assumption}
In the following we will use the notation $\mathcal{L}(Y, Z)$ to denote the space of the linear bounded operators from $Y$ to $Z$ where $Y$ and $Z$ are two Banach spaces. Define for $\nu\in I$ the linear operator $ B_\nu: D(B_\nu)\subset X\to X$ as follows,
\begin{equation*}
D(B_\nu)=D(A)\text{ and } B_\nu x=Ax+\partial_v H(\nu,\bar{v}_\nu)x, \;\forall x\in D(B_\nu),
\end{equation*} 
wherein $\partial_v$ corresponds to the partial derivative of $H(\nu,v)$ with respect to $v$. The bounded linear operator $\partial_v H(\nu,\bar{v}_\nu) \in \mathcal{L}( X_0 , X) $ is defined by 
\begin{equation*}
	\partial_v H(\nu,\bar{v}_\nu) 
	\begin{pmatrix}
		0_\R\\
	\varphi 
	\end{pmatrix}=	\begin{pmatrix}
 \partial_w f\left(\nu, \overline{w}_\nu\right) \int_0^\infty b(l) \varphi(l) dl	\\
0_{L^1}
\end{pmatrix},\;\forall \begin{pmatrix}
		0_\R\\
	\varphi 
	\end{pmatrix}\in X_0.
\end{equation*}
Herein $\partial_w$ denotes the partial derivative of $f=f(\nu, w)$ with respect to $w$. 

\medskip 
 By using the result of Ducrot, Liu and Magal \cite{DLM2008}, the essential growth rate of the semigroup generated by $(B_\nu)_0$, the part of $B_\nu$ in the closure of its domain, satisfies
$$ 
\omega_{0, ess}((B_\nu)_0)\leq-\mu<0. 
$$ 
The following result follows from \cite[Theorem 4.3.27, Lemma 4.4.2,  Theorem 4.4.3-(ii)]{magal2018theory} to which we refer the reader for a proof and more details.  
\begin{lemma}The spectrum of $B_\nu$ in the half plane 
$$
\Omega:=\left\{ \lambda \in \C: \Re \lambda>-m\right\}
$$
contains only isolated eigenvalues which are poles of the resolvent of $B_\nu$.  
\end{lemma}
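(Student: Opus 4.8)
The plan is to give a direct, self-contained argument by computing the resolvent of $B_\nu$ explicitly on $\Omega$, and to cross-check it against the abstract spectral theory for generators of integrated semigroups already invoked through $\omega_{0,ess}((B_\nu)_0)\le-\mu<0$. The starting observation is that $D(B_\nu)=D(A)=\{0_\R\}\times W^{1,1}((0,\infty),\R)$ is not dense in $X$, so $B_\nu$ is not a $C_0$-semigroup generator; nevertheless it generates an integrated semigroup, its resolvent $(\lambda-B_\nu)^{-1}\in\mathcal L(X)$ is well defined for $\lambda\in\rho(B_\nu)$, and $\sigma(B_\nu)$ makes sense. This is all I need.

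First I would solve the resolvent equation $(\lambda-B_\nu)\binom{0}{\varphi}=\binom{\alpha}{g}$ for $\binom{0}{\varphi}\in D(A)$ and $\binom{\alpha}{g}\in X$. Writing out the two components, the $L^1$-component is the linear ODE $\varphi'+(\lambda+m)\varphi=g$, whose $W^{1,1}$-solution exists precisely when $\Re\lambda>-m$ (so that $e^{-(\lambda+m)\cdot}\in L^1$) and is $\varphi(a)=\varphi(0)e^{-(\lambda+m)a}+\int_0^a e^{-(\lambda+m)(a-s)}g(s)\,ds$; the $\R$-component then reduces to a single scalar equation for the unknown $\varphi(0)$, whose coefficient is exactly the characteristic determinant
\begin{equation*}
\Delta(\nu,\lambda):=1-\partial_w f(\nu,\overline{w}_\nu)\int_0^\infty\chi(l)e^{-\lambda l}\,dl,\qquad \chi(l)=b(l)e^{-ml}.
\end{equation*}
Hence for $\lambda\in\Omega$ the resolvent exists if and only if $\Delta(\nu,\lambda)\neq0$, and where it exists it is $\Delta(\nu,\lambda)^{-1}$ times an $\mathcal L(X)$-valued expression that is holomorphic on $\Omega$. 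In particular $\sigma(B_\nu)\cap\Omega=\{\lambda\in\Omega:\Delta(\nu,\lambda)=0\}$ and $\lambda\mapsto(\lambda-B_\nu)^{-1}$ is meromorphic on $\Omega$.

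It then remains to see that the zero set of $\Delta(\nu,\cdot)$ is discrete and that each zero is a pole of the resolvent. Because $b\in L^\infty_+$ gives $|\chi(l)e^{-\lambda l}|\le\|b\|_\infty e^{-(m+\Re\lambda)l}$ with $m+\Re\lambda>0$ on $\Omega$, the Laplace transform $\lambda\mapsto\int_0^\infty\chi(l)e^{-\lambda l}\,dl$ is holomorphic on $\Omega$ (Morera / differentiation under the integral), so $\Delta(\nu,\cdot)$ is holomorphic there; since $\int_0^\infty\chi(l)e^{-\lambda l}\,dl\to0$ as $\Re\lambda\to+\infty$ we have $\Delta(\nu,\lambda)\to1$, so $\Delta(\nu,\cdot)\not\equiv0$. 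A nonzero holomorphic function has isolated zeros with no accumulation point in $\Omega$, and each zero of $\Delta$ is a pole of $\Delta^{-1}$, hence of the resolvent. This is exactly the assertion of the lemma.

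Finally I would note the abstract confirmation: since $\|\widehat T_{A_0}(t)\|_{\mathcal L(L^1)}=e^{-mt}$ we have $\omega_0(A_0)=-m$ and $\omega_{0,ess}(A_0)\le-m$, and the finite-rank boundary perturbation $\partial_v H(\nu,\bar v_\nu)$ leaves the essential growth rate unchanged, so $\Omega=\{\Re\lambda>-m\}$ lies to the right of $\omega_{0,ess}((B_\nu)_0)$; the cited Theorem 4.3.27, Lemma 4.4.2 and Theorem 4.4.3-(ii) of \cite{magal2018theory} then give the same conclusion without the explicit computation. The only genuinely delicate input along this abstract route is the invariance of the essential growth rate under the perturbation (the content of \cite{DLM2008}, where the regularity of $b$ enters); along the explicit route the only point requiring care is the bookkeeping showing that the coefficient of $\varphi(0)$ is precisely $\Delta(\nu,\lambda)$ and that the remaining $\mathcal L(X)$-valued factor is holomorphic, after which everything is routine.
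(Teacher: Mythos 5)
Your proposal is correct, but it proves the lemma by a genuinely different route than the paper. The paper offers no direct argument at all: it combines the essential growth estimate $\omega_{0,ess}((B_\nu)_0)\le -\mu<0$ of \cite{DLM2008} with the abstract spectral theory of non-densely defined operators, citing \cite[Theorem 4.3.27, Lemma 4.4.2, Theorem 4.4.3-(ii)]{magal2018theory}, which yields that to the right of the essential growth bound the spectrum consists of isolated eigenvalues that are poles of the resolvent with finite-rank spectral projections. You instead compute $(\lambda I-B_\nu)^{-1}$ explicitly --- in effect front-loading the paper's \emph{next} lemma --- and reduce the statement to scalar complex analysis: $\Delta(\nu,\cdot)$ is holomorphic on $\Omega$ (Laplace transform of $\chi\in L^1$ with the bound $\|b\|_\infty e^{-(m+\Re\lambda)l}$), tends to $1$ as $\Re\lambda\to+\infty$, hence $\Delta(\nu,\cdot)\not\equiv 0$, its zeros are isolated in $\Omega$, and the resolvent is meromorphic there. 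This is sound, and your bookkeeping that the coefficient of the unknown $\varphi(0)$ is exactly $\Delta(\nu,\lambda)$ is right; it even reveals a sign typo in the paper's displayed $\alpha_1$, where the minus in front of $\partial_w f\left(\nu,\overline{w}_\nu\right)\int_0^\infty b(a)\int_0^a e^{-(\lambda+m)(a-s)}\psi(s)\,ds\,da$ should be a plus. Two steps deserve one explicit line each to be airtight: (i) the lemma asserts \emph{eigenvalues}, while you only produce spectral points and poles --- but at a zero $\lambda_0$ of $\Delta(\nu,\cdot)$ your scalar equation with $\alpha=0$, $g=0$ admits $\varphi(0)\neq 0$, so $a\mapsto e^{-(\lambda_0+m)a}$ is an explicit eigenfunction (alternatively, invoke the standard fact that a pole of the resolvent lies in the point spectrum); (ii) ``a zero of $\Delta$ is a pole of $\Delta^{-1}$, hence of the resolvent'' needs cancellation ruled out --- applying the resolvent to $(\alpha,\psi)=(1,0)$ shows the pole order equals the order of the zero. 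As for what each route buys: yours is elementary, self-contained, needs only $b\in L^\infty_+$, and dovetails with the resolvent computation the paper carries out anyway; the paper's citation is one line and delivers extra structure (finite algebraic multiplicity, finite-rank spectral projections) that is implicitly useful later when simplicity of $i\omega_0$ is obtained via Lemma 5.8 of \cite{LMR11}. One correction to your closing remark: the bounded-variation regularity of $b$ does \emph{not} enter \cite{DLM2008} --- that perturbation result needs only boundedness of $\partial_v H(\nu,\bar v_\nu)$, i.e.\ $b\in L^\infty$; the BV hypothesis on $a\mapsto a\chi(a)$ is used only later, in Lemma \ref{LE4.2}, to differentiate $G$ with respect to $\omega$.
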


Recall that the characteristic function, describing the spectrum of $B_\nu$ in $\Omega$, is obtained by computing the resolvent of $B_\nu$ as presented in Liu, Magal and Ruan \cite{LMR11}.  We define the \textbf{characteristic function} for $\nu\in I$ and $\lambda\in\Omega$ as follows
$$
\Delta(\nu, \lambda):= 1-  \partial_w f\left(\nu, \overline{w}_\nu\right) \int_0^\infty b(l)e^{-\left(m + \lambda \right)l}dl. 
$$

\medskip

Recall that the resolvent set $\rho(A)$ of $A$ contains $ \Omega $ and for each $\lambda \in \Omega$ the resolvent of $A$ is defined by the following formula 
\begin{equation*}
	\begin{array}{l}
	\left( \lambda I- A \right)^{-1} \left( \begin{array}{c}
		\alpha \\
		\psi
	\end{array}\right)
	=
	\left( \begin{array}{c}
		0 \\
		\varphi 
	\end{array}\right) 	\\
\\
\Leftrightarrow 	\varphi (a)=e^{- \left(\lambda+m\right) a } \alpha + \int_0^a e^{- \left(\lambda+m\right) \left(a-s\right)} \psi(s) ds.
	\end{array}
\end{equation*}
We now recall some result already presented (in a more general framework in the Section 5.2 in \cite{LMR11}) 
\begin{lemma} For each $\nu\in I$ the  resolvent set $\rho(B_\nu)$   of $B_\nu$  satisfies 
	\begin{equation*}
\lambda \in	\rho(B_\nu)  \cap \Omega  \Leftrightarrow \Delta(\nu, \lambda) \neq 0,
	\end{equation*}
or equivalently  the spectrum $\sigma(B_\nu):= \C \setminus \rho(B_\nu)$   of $B_\nu$  satisfies 
	\begin{equation*}
	\lambda \in \sigma(B_\nu)  \cap \Omega  \Leftrightarrow \Delta(\nu, \lambda)=0. 
\end{equation*} 
Moreover one has
\begin{equation*}
	\begin{array}{l}
		\left( \lambda I- B_\nu \right)^{-1} \left( \begin{array}{c}
			\alpha \\
			\psi
		\end{array}\right)
		=
		\left( \begin{array}{c}
			0 \\
			\varphi 
		\end{array}\right) 	\\
		\\
		\Leftrightarrow 	\varphi (a)=e^{- \left(\lambda+m\right) a } \alpha_1 + \int_0^a e^{- \left(\lambda+m\right) \left(a-s\right)} \psi(s) ds,
	\end{array}
\end{equation*}
where
$$
 \alpha_1 :=\Delta(\nu, \lambda)^{-1} \left[ \alpha  -  \partial_w f\left(\nu, \overline{w}_\nu\right) \int_0^\infty b(a)  \int_0^a e^{- \left(\lambda+m\right) \left(a-s\right)}	\psi(s) dsda\right].
$$
\end{lemma}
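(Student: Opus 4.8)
The plan is to treat $B_\nu$ as a bounded, rank-one perturbation of $A$ acting only on the $\R$-component, and to reduce the resolvent equation for $B_\nu$ to the explicit resolvent of $A$ recalled just above the statement. Fix $\lambda\in\Omega$ and $\begin{pmatrix}\alpha\\\psi\end{pmatrix}\in X$, and look for $\begin{pmatrix}0\\\varphi\end{pmatrix}\in D(B_\nu)=D(A)$ solving $(\lambda I-B_\nu)\begin{pmatrix}0\\\varphi\end{pmatrix}=\begin{pmatrix}\alpha\\\psi\end{pmatrix}$. Since $B_\nu=A+\partial_v H(\nu,\bar v_\nu)$ and $\partial_v H(\nu,\bar v_\nu)\begin{pmatrix}0\\\varphi\end{pmatrix}$ has a vanishing $L^1$-component, the equation is equivalent to $(\lambda I-A)\begin{pmatrix}0\\\varphi\end{pmatrix}=\begin{pmatrix}\alpha+c\\\psi\end{pmatrix}$, where the scalar $c:=\partial_w f(\nu,\overline w_\nu)\int_0^\infty b(l)\varphi(l)\,dl$ still depends on the unknown $\varphi$. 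Equivalently, writing out the two components directly, the $L^1$-line is the ODE $\varphi'+(\lambda+m)\varphi=\psi$ while the $\R$-line is the ``boundary'' relation $\varphi(0)=\alpha+c$.

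First I would solve the ODE using the explicit resolvent of $A$. Setting $\alpha_1:=\varphi(0)$ (which equals $\alpha+c$), this gives
$$\varphi(a)=e^{-(\lambda+m)a}\alpha_1+\int_0^a e^{-(\lambda+m)(a-s)}\psi(s)\,ds,$$
so the whole solution is parametrized by the single scalar $\alpha_1$. All integrals converge: for $\lambda\in\Omega$ one has $\Re(\lambda+m)>0$, so since $b\in L^\infty$ the kernel $b(l)e^{-(\lambda+m)l}$ is integrable and $\varphi\in W^{1,1}$, which guarantees the candidate genuinely lies in $D(A)=D(B_\nu)$.

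Next I would close the loop by substituting this formula back into the boundary relation $\alpha_1=\alpha+\partial_w f(\nu,\overline w_\nu)\int_0^\infty b(l)\varphi(l)\,dl$. Computing $\int_0^\infty b(l)\varphi(l)\,dl$ splits it into the term $\alpha_1\int_0^\infty b(l)e^{-(\lambda+m)l}\,dl$, whose coefficient is exactly $1-\Delta(\nu,\lambda)$ by the definition of the characteristic function, plus a term depending only on $\psi$ obtained after a Fubini interchange in the double integral. Collecting the $\alpha_1$-terms reduces everything to one scalar linear equation of the form
$$\Delta(\nu,\lambda)\,\alpha_1=\alpha+\partial_w f(\nu,\overline w_\nu)\int_0^\infty b(a)\int_0^a e^{-(\lambda+m)(a-s)}\psi(s)\,ds\,da,$$
which is precisely the bracketed expression for $\alpha_1$ appearing in the statement.

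The conclusion is then immediate: this scalar equation is uniquely solvable in $\alpha_1$ for every right-hand side if and only if $\Delta(\nu,\lambda)\neq0$, yielding both equivalences $\lambda\in\rho(B_\nu)\cap\Omega\Leftrightarrow\Delta(\nu,\lambda)\neq0$ and $\lambda\in\sigma(B_\nu)\cap\Omega\Leftrightarrow\Delta(\nu,\lambda)=0$, together with the displayed resolvent formula. Conversely, when $\Delta(\nu,\lambda)=0$ the homogeneous problem $\alpha=0,\ \psi=0$ admits the nontrivial solution $\varphi(a)=e^{-(\lambda+m)a}$ (which lies in $D(B_\nu)$ since $\Re\lambda>-m$), so $\lambda$ is an eigenvalue. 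I expect no serious obstacle here: the argument is an entirely explicit bounded rank-one perturbation computation, and the only points requiring care are the integrability bookkeeping for $\lambda\in\Omega$ and the Fubini interchange in the double integral, both routine consequences of $\Re\lambda>-m$ and $b\in L^\infty$.
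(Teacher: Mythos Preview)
Your proposal is correct and follows essentially the same approach as the paper: both treat $B_\nu$ as a bounded rank-one perturbation of $A$ and reduce the resolvent problem to a single scalar equation for the boundary value $\alpha_1=\varphi(0)$, whose unique solvability is exactly the condition $\Delta(\nu,\lambda)\neq 0$. The paper packages this computation via the operator factorization $(\lambda I-B_\nu)^{-1}=(\lambda I-A)^{-1}\bigl[I-\partial_v H(\nu,\bar v_\nu)(\lambda I-A)^{-1}\bigr]^{-1}$ and then inverts the bracket explicitly, but the resulting scalar equation and the underlying argument are identical to yours.
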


\begin{proof} For each $\lambda \in \Omega$ the linear operator $\lambda I -B_\nu$ is invertible if and only if the linear operator is invertible 
	$$
	I - \partial_v H(\nu,\bar{v}_\nu) 	\left( \lambda I- A \right)^{-1} . 
	$$
	In that case we have 
	$$
	(\lambda I -B_\nu)^{-1}=	\left( \lambda I- A \right)^{-1}  \left[ 	I - \partial_v H(\nu,\bar{v}_\nu) 	\left( \lambda I- A \right)^{-1} \right]^{-1}. 
	$$
\noindent \textbf{Computation of the inverse of $ 	  	I - \partial_v H(\nu,\bar{v}_\nu) 	\left( \lambda I- A \right)^{-1} $	:} We have 
$$
\begin{array}{l}
	\left( \begin{array}{c}
	\widehat{\alpha } \\
	\widehat{	\varphi }
\end{array}\right) =\left[I - \partial_v H(\nu,\bar{v}_\nu) 	\left( \lambda I- A \right)^{-1}  \right] 
\left( \begin{array}{c}
	\alpha  \\
	\varphi
\end{array}\right) 
\\
\Leftrightarrow 
	\left( \begin{array}{c}
	\widehat{\alpha } \\
	\widehat{	\varphi }
\end{array}\right) =
\left( \begin{array}{c}
	\alpha-  \partial_w f\left(\nu, \overline{w}_\nu\right) \int_0^\infty b(a)\left[ e^{- \left(\lambda+m \right) a } \alpha + \int_0^a e^{- \left(\lambda+m \right) \left(a-s\right)} \	\varphi(s) ds \right] da\\
	\varphi
\end{array}\right). 
\end{array} 
$$
Therefore $ 	  	I - \partial_v H(\nu,\bar{v}_\nu) 	\left( \lambda I- A \right)^{-1} $ is invertible (for $\lambda \in \Omega$)  if and only if $\Delta(\nu, \lambda) \neq 0$ and one has
$$
\begin{array}{l}
		\left( \begin{array}{c}
		\alpha  \\
		\varphi
	\end{array}\right) 
=\left[I - \partial_v H(\nu,\bar{v}_\nu) 	\left( \lambda I- A \right)^{-1}  \right]^{-1} 
	\left( \begin{array}{c}
	\widehat{\alpha } \\
	\widehat{	\varphi }
\end{array}\right) 
	\\
	\Leftrightarrow 
	\left( \begin{array}{c}
	\alpha  \\
		\varphi 
	\end{array}\right) =
	\left( \begin{array}{c}
\Delta(\nu, \lambda)^{-1} \left[	\widehat{\alpha } -  \partial_w f\left(\nu, \overline{w}_\nu\right) \int_0^\infty b(a)  \int_0^a e^{- \left(\lambda+m \right) \left(a-s\right)}\widehat{	\varphi }(s) dsda\right]	\\
	\widehat{	\varphi }
	\end{array}\right).
\end{array} 
$$
The result follows. 
\end{proof}

\begin{assumption}\label{ASS3.7} There exists $\nu_0 \in I$ and $\omega_0>0$ such that the following properties are satisfied
	\begin{itemize}
			\item [{\rm (i)}] 
		$$
	\Delta(\nu_0 , \omega_0 i )=0.
		$$

		\item [{\rm (ii)}] Crandall and Rabinowitz's condition
		$$
	\Delta(\nu_0 , k\,   \omega_0 i )\neq 0,  \forall k \in \N \text{ and } k \neq 1. 
		$$
		
			\item [{\rm (iii)}] Simplicity of the eigenvalue $ \omega_0 i$
		$$ 
	\partial_\lambda	\Delta(\nu_0 ,    \omega_0 i )\neq 0, 
		$$
	which is also equivalent to
	$$
	\partial_w f\left(\nu_0, \overline{w}_{\nu_0}\right) \int_0^\infty b(l) \, l \, e^{- \left( m+  \omega_0 i  \right)l}dl \neq 0. 
	$$
			\item [{\rm (iv)}]  Transversality condition
		$$
	\Re \left(	\partial_\nu	\Delta(\nu_0 ,    \omega_0 i )\times  \overline{ \partial_\lambda	\Delta(\nu_0 ,    \omega_0 i ) }  \right) \neq 0.
		$$
	
	\end{itemize}
\end{assumption}
We can observe that by combining (i) and (iii)  and by using the implicit function theorem there exists a branch $\lambda: (\nu_0-\eta, \nu_0+\eta)\subset I\to\mathbb{C}$ with some $\eta>0$ small enough such that for each $\nu\in(\nu_0-\eta, \nu_0+\eta)$, $\lambda(\nu)=\alpha(\nu)+i \omega(\nu)$ and $\overline{\lambda(\nu)}=\alpha(\nu)-i \omega(\nu)$ satisfying  solution of 
\begin{equation} \label{3.2}
\Delta(\nu , \lambda(\nu))=0	
\end{equation}
and 
$$
\lambda(\nu_0)=i\omega_0.
$$ 
Moreover by combining (iii)-(iv) we deduce that the transversality condition is satisfied. Namely, one has  
$$ 
\Re \frac{d\lambda(\nu_0)}{d\nu}\neq0. 
$$
Moreover by using the property (iii) in Lemma 5.8 in \cite{LMR11} we also deduce that $i \omega_0$ is a simple eigenvalue of $B_{\nu_0}$  since
$$
\lim_{\lambda \to \lambda_0} \dfrac{\Delta(\nu_0, \lambda)}{\lambda-\lambda_0} \neq 0 \Leftrightarrow \partial_\lambda \Delta(\nu_0, \lambda_0)\neq 0. \quad(\text{with } \lambda_0=\lambda(\nu_0) ).
$$
The condition (ii) avoid to assume that the purely imaginary spectrum is reduced to the a single pair of purely imaginary eigenvalues. Such a condition has been introduced in the Crandall and Rabinowitz's proof \cite{crandall1977hopf}.

\section{Main Result}\label{section4}

In this section we state the main theorem of this paper. The following result is inspired by Crandall and Rabinowitz \cite{crandall1977hopf}.
\begin{theorem}\label{TH4.1}
	Let Assumptions \ref{ASS3.2},  \ref{ASS3.4} and \ref{ASS3.7} be satisfied. Then there exist a constant $\delta>0$ and two $C^1$ maps, $s\to\nu(s)$ from $(-\delta, \delta)$ to $\mathbb{R}$ and $s\to\omega(s)$ from $(-\delta, \delta)$ to $\mathbb{R}$ such that for each $s\in(-\delta, \delta)$ there exists a $2\pi/\omega(s)-$periodic solution $u(s)$ of class of $C^1$ which is a solution of \eqref{1.1} with the parameter $\nu=\nu(s)$. Moreover, the branch of periodic orbit is bifurcating from $\nu_0$ at $\nu=\nu_0$, that is to say that $$ \nu(0)=\nu_0, \quad\omega(0)=\omega_0. $$ 
\end{theorem}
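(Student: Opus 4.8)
**The plan is to reformulate the Hopf bifurcation problem as a fixed-point/zero-finding problem in a space of periodic functions, and apply the implicit function theorem à la Crandall–Rabinowitz.**

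The approach hinges on looking for periodic solutions of prescribed period via a rescaling of time. First I would set $\tau = \omega t$ so that a $2\pi/\omega$-periodic solution in $t$ becomes a $2\pi$-periodic solution in $\tau$; the unknowns then become a profile function together with the two scalars $\omega$ and $\nu$. Working in the space $E_{2\pi}$ of $2\pi$-periodic $C^1$ functions valued in $X_0$, I would rewrite the abstract Cauchy problem \eqref{2.1} in its integrated (mild) form using the integrated semigroup $\{S_A(t)\}_{t\ge 0}$, exploiting the explicit resolvent formulas from the lemmas above. The key point is that on the space of periodic functions the solution operator can be expressed through the resolvent $(\lambda I - B_\nu)^{-1}$ evaluated along the imaginary axis at the Fourier frequencies $k\omega i$, $k\in\mathbb{Z}$. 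This is precisely where the bounded-variation hypothesis on $\tau: a\mapsto a\,\chi(a)$ (Assumption \ref{ASS3.2}(ii)) enters, via the integration-by-parts formula \eqref{3.1}: it guarantees the $C^1$-regularity in time that the hyperbolic semigroup does not provide for free, allowing the Fredholm-type setup needed for the implicit function theorem.

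Next I would define a $C^1$ nonlinear map $\Phi$ whose zeros correspond to the periodic solutions, after subtracting off the equilibrium $\bar v_\nu$ and splitting off the one-dimensional kernel spanned by the eigenvector associated with $i\omega_0$. Concretely, I would decompose the solution as $v = \bar v_\nu + s\,\xi(\tau) + w$, where $\xi$ is the normalized eigenfunction of the linearized period map at $(\nu_0,\omega_0)$ and $w$ lies in a complementary subspace. Assumption \ref{ASS3.7}(ii) ensures that the only resonant Fourier mode is $k=\pm 1$, so the linearized operator is invertible on the complement; Assumption \ref{ASS3.7}(iii), the simplicity of $i\omega_0$ expressed through $\partial_\lambda\Delta(\nu_0,i\omega_0)\neq 0$, guarantees the kernel is exactly one-dimensional. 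The implicit function theorem then yields $C^1$ branches $s\mapsto (\nu(s),\omega(s),w(s))$ with $\nu(0)=\nu_0$, $\omega(0)=\omega_0$, $w(0)=0$, solving $\Phi=0$ for small $s$.

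The main obstacle I anticipate is establishing the requisite $C^1$ differentiability of the solution map in the time variable, since $t\mapsto T_{A_0}(t)\psi$ is merely continuous (not $C^1$) for a hyperbolic generator — this is exactly the gap between the parabolic setting of Crandall and Rabinowitz \cite{crandall1977hopf} and the present one. My strategy for overcoming it is to not differentiate the semigroup directly, but rather to work with the characteristic function $\Delta$ and the explicit resolvent representation, transferring all regularity questions to the integrable/bounded-variation structure of $\chi$ and $\tau$. The bounded-variation assumption, through formula \eqref{3.1}, effectively provides the missing time-regularity of the relevant integral operators at the level of the profile equation, which is what makes the Fredholm alternative and the implicit function theorem applicable. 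Finally, the transversality condition \eqref{3.2} together with Assumption \ref{ASS3.7}(iv), giving $\Re\, d\lambda(\nu_0)/d\nu \neq 0$, ensures the bifurcation is genuine (the branch crosses the imaginary axis) rather than spurious, completing the verification that the constructed $u(s)$ are nontrivial periodic solutions of \eqref{1.1}.
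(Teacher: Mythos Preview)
Your high-level strategy---time rescaling to fix the period at $2\pi$, a Lyapunov--Schmidt style splitting, and the implicit function theorem in the spirit of Crandall--Rabinowitz---matches the paper's. But you miss the single structural idea that makes the proof \emph{short}: the paper does not work in a space of $X_0$-valued periodic functions at all. Instead it first solves \eqref{1.1} along characteristics, obtaining $u(t,a)=v(t-a)e^{-ma}$ with $v(t)=u(t,0)$, so that the entire problem collapses to the \emph{scalar} renewal equation
\[
v(t)=f\Bigl(\nu,\int_0^\infty \chi(a)\,v(t-a)\,da\Bigr),
\]
and after the rescaling $w(t)=v(t/\omega)$ one seeks zeros of a map $F:\R^2\times C_{2\pi}(\R)\to C_{2\pi}(\R)$ between spaces of scalar $2\pi$-periodic continuous functions. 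The kernel of the linearization is then simply $\mathrm{span}\{\cos,\sin\}$, the complement is its $L^2$-orthogonal, and the compactness needed for the Fredholm alternative is just compactness of a scalar convolution operator.

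This reduction is also where the bounded-variation hypothesis is actually used, and in a much more concrete way than you indicate. The only regularity obstacle is showing that the scalar map
\[
(\omega,x)\longmapsto \int_0^\infty \chi(l)\,x(\cdot-\omega l)\,dl
\]
is $C^1$ in $\omega$ on $C_{2\pi}(\R)$. For $x\in C^1$ one differentiates under the integral and then integrates by parts against $\tau(l)=l\chi(l)$ via \eqref{3.1}, obtaining $\partial_\omega G(\omega,x)=\int_0^\infty x(\cdot-\omega l)\,\mu_\tau(dl)$; the bounded-variation assumption makes this a bounded operator on $C_{2\pi}(\R)$, and a density argument extends it to all of $C_{2\pi}(\R)$. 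Your proposal to ``work with the characteristic function $\Delta$ and the explicit resolvent representation'' in the abstract $X_0$-valued setting is essentially the route of the earlier, longer proofs (Magal--Ruan, Liu--Magal--Ruan) and does not explain how the bounded variation of $\tau$ would deliver $C^1$ dependence of an $X_0$-valued map on $\omega$; that step remains a genuine gap in your outline.
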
 

\begin{proof}(Proof of Theorem \ref{TH4.1})
	Up to time rescaling we can assume, without loss of generality, that $\omega_0=1$. Observe that Assumption \ref{ASS3.7}-(i) implies that \eqref{1.1} linearized about $u=\bar{u}_{\nu_0}$ for $\nu=\nu_0$ has nontrivial $2\pi-$periodic solutions. We now seek nontrivial $2\pi/\omega-$periodic solutions of \eqref{1.1} with $\omega$ close to $1$ and $(u, \nu)$ close to $(\bar{u}_{\nu_0}, \nu_0)$. 
	
	Solving \eqref{1.1} along the characteristic line $t-a=$constant, one obtains
	$$
	u(t,a)=u(t-a,0)e^{-m \, a},\;t\in \R ,a>0.
	$$
	Thus $v=v(t)$ given by
	\begin{equation*}
	v(t)=u(t,0),
	\end{equation*}
	satisfies the renewal equation
	\begin{equation*}
	v(t)=f\left(\nu, \int_0^\infty b(a) v(t-a) e^{-m\, a}da\right),\;t\in\R.
	\end{equation*}
	Setting
	$$
	w(t)=v\left(\frac{t}{\omega}\right),
	$$
	yields the following equation for the $2\pi-$periodic function $w=w(t)$
	\begin{equation*}
	w(t)=v\left(\frac{t}{\omega}\right)=f\left(\nu, \int_0^\infty b(a) v(t/\omega-a) e^{-m \, a}da\right)=f\left(\nu, \int_0^\infty b(a) w(t-\omega a) e^{-m\, a}da\right),\;t\in\R.
	\end{equation*}
	Recalling the definition of $\chi$ in Assumption \ref{ASS3.2} and using the change of the variable $l=\omega a$ in the integral lead to the following equation for $w=w(t)$
	\begin{equation}\label{4.1}
	w(t)=f\left(\nu, \int_0^\infty\frac{1}{\omega}\chi\left(\frac{l}{\omega}\right) w(t-l)dl\right),\;t\in\R.
	\end{equation}
	Now the existence of nontrivial $2\pi/\omega-$periodic solution of \eqref{1.1} becomes equivalent to the one of nontrivial $2\pi-$ periodic solution of \eqref{4.1}. Next we shall apply the implicit function theorem to investigate the existence of nontrivial $2\pi-$periodic solution of \eqref{4.1} for $\nu$ close to $\nu_0$. 
	
	Let $C_{2\pi}(\mathbb{R})$ be the Banach space of the continuous $2\pi-$periodic functions. Define the map $F: \mathbb{R}^2\times C_{2\pi}(\mathbb{R})\to C_{2\pi}(\mathbb{R})$ by
	$$ 
	F(\omega, \nu, x)(t)=x(t)-f\left(\nu, \int_0^\infty\frac{1}{\omega}\chi\left(\frac{l}{\omega}\right) x(t-l)dl\right),\quad \forall(\omega, \nu, x)\in\mathbb{R}^2\times C_{2\pi}(\mathbb{R}). 
	$$
	We now aim at investigating the zeros of the equation 
	$$ 
	F(\omega, \nu, x)=0, 
	$$
	for $(\omega, \nu, x)$ close to $(1, \nu_0, \overline{w}_{\nu_0})$ using the implicit function theorem. To do so, we need first to verify the smoothness of  
	$$ \int_0^\infty\frac{1}{\omega}\chi\left(\frac{l}{\omega}\right)x(t-l)dl=\int_0^\infty \chi(l) x(t-\omega l)dl, 
	$$
	with respect to $\omega$. 
	
    The first main step of this proof is the following lemma. 
	\begin{lemma}\label{LE4.2} The map $G:\R\times C_{2\pi}(\mathbb R)\to C_{2\pi}(\mathbb R)$ defined by
\begin{equation*}
G(\omega,x)(t)=\int_0^\infty \chi(l) x(t-\omega l)dl,\;t\in\R,
\end{equation*}	
is continuously differentiable with respect to $\omega\in\R$ and its partial derivative with respect to $\omega$, denoted by $\partial_\omega G$, is given 
\begin{equation*}
\partial_\omega G(\omega,x)(\cdot)=\int_0^\infty x(\cdot-\omega l)\mu_\tau(dl),\;\forall (\omega,x)\in\R\times C_{2\pi}(\mathbb R).
\end{equation*}
Herein $\tau$ and $\mu_\tau$ are defined in Assumption \ref{ASS3.2}-(ii).
	\end{lemma}

	\begin{proof}
 
		First observe that using Fubini theorem, for any $x\in C^1(\mathbb{R}) \cap C_{2\pi}(\mathbb{R})$ and $\omega\in\R$ we have, for any $t\in\R$, 
		\begin{equation*} 
		\begin{array}{ll}
		\int_0^\omega \int_0^\infty - x'(t-\sigma l) \,l \,\chi(l) \, dl \, d\sigma &=\int_0^\infty	\int_0^\omega -l x'(t-\sigma l)  \, d\sigma \, \chi(l) \,dl  \\
		&=\int_0^\infty	\left[ x(t-\sigma l) \right]_{\sigma=0}^{\sigma=\omega} \,  \chi(l)  \, dl \\
		&= \int_0^\infty	\chi(l) x(t-\omega l)  dl  - x(t)   \int_0^\infty	 \chi(l)dl,
		\end{array}	
		\end{equation*}
		so that, since $\int_0^\infty \chi(l)dl=1$ (see Assumption \ref{ASS3.2}-(i)), we get
	\begin{equation} \label{4.2}
		\int_0^\omega \int_0^\infty - x'(t-\sigma l)l\chi(l)dl d\sigma= \int_0^\infty \chi(l) x(t-\omega l)  dl  - x(t),	
	\end{equation}
	that is for all $(\omega,x)\in \R\times\left(C^1(\mathbb{R}) \cap C_{2\pi}(\mathbb{R})\right)$ and $t\in\R$
\begin{equation*}
G(\omega,x)(t)=x(t)-\int_0^\omega \int_0^\infty x'(t-\sigma l)\tau(l)dl d\sigma.
\end{equation*}	
We deduce that for all $x\in C^1(\mathbb{R}) \cap C_{2\pi}(\mathbb{R})$ the map  $\omega \to  G(\omega,x)=\int_0^\infty	\chi(l) x(\cdot-\omega l)  dl $ is of class $C^1$ and 
			\begin{equation} \label{4.3}
			\frac{d}{d\omega}\int_0^\infty \chi(l) x(t-\omega l) dl=-\int_0^\infty x'(t-\omega l)l \chi(l) dl.
			\end{equation} 
    Moreover by using again the formula \eqref{4.2} we get
	\begin{equation*} 
	\int_{\omega}^{\omega+\varepsilon} \int_0^\infty - x'(t-\sigma l)l\chi(l)dl d\sigma= \int_0^\infty	\chi(l) x(t-(\omega + \varepsilon)l)  dl  -  \int_0^\infty \chi(l) x(t-\omega l) dl  . 
    \end{equation*} 
    hence 
		\begin{equation} \label{4.4}
			\int_{\omega}^{\omega+\varepsilon}\frac{d}{d\sigma}\int_0^\infty \chi(l) x(t-\sigma l)dl d\sigma= \int_0^\infty \chi(l)	 x(t-(\omega + \varepsilon)l) dl  -  \int_0^\infty	\chi(l) x(t-\omega l) dl  . 
			\end{equation} 
    By using the integration by parts formula \eqref{3.1} and \eqref{4.3}, we obtain  for all $(\omega,x)\in\R\times\left(C^1(\mathbb{R}) \cap C_{2\pi}(\mathbb{R})\right)$
		\begin{equation*}
		\partial_\omega G(\omega,x)(t)=\frac{d}{d\omega}\int_0^\infty \chi(l) x(t-\omega l)dl=-\int_0^\infty x'(t-\omega l)l\chi(l)dl=\int_0^\infty x(t-\omega l)\mu_\tau(dl).
		\end{equation*} 
		Then using Assumption \ref{ASS3.2}-(ii) we infer from the above equality that
		\begin{equation*}
		\norm{\partial_\omega G(\omega,x)}_{C_{2\pi}(\mathbb{R})}\le \norm{x}_{C_{2\pi}(\mathbb{R})}|\mu_\tau|((0, \infty))<\infty,\, \forall (\omega,x)\in \R\times C^1(\mathbb{R}) \cap C_{2\pi}(\mathbb{R}),
		\end{equation*} 
    wherein $|\mu_\tau|((0, \infty))$ is nothing but the variation of $\tau(a)$ on $(0, \infty)$. That is supremum  over all the subdivision $0=t_0 <t_1 <t_2 <\ldots <t_n =M$ of 
    $$
    |\mu_\tau|((0,M])=\sup_{0=t_0 <t_1 <t_2 <\ldots <t_n =M} \sum_{i=0}^{n} \vert \tau(t_{i+1})-\tau(t_i) \vert, 
    $$
    and 
    $$
    |\mu_\tau|((0,\infty))=\lim_{M \to +\infty} |\mu_\tau|((0,M]). 
    $$
    We now define $L_\omega \in \mathcal{L} \left( C_{2\pi}(\mathbb{R})\right) $ by
		$$ 
		L_\omega(x)(t):=\int_0^\infty x(t-\omega l)\mu_\tau(dl). 
		$$	
    Now the result follows by using the \eqref{4.4} and the density of $C^1(\mathbb{R}) \cap C_{2\pi}(\mathbb{R})$ into $C_{2\pi}(\mathbb{R})$, since it implies that 
    \begin{equation*}
	\int_{\omega}^{\omega+\varepsilon} L_\sigma (x)(t) d\sigma= \int_0^\infty	\chi(l) x(t-(\omega + \varepsilon)l)  dl  -  \int_0^\infty	\chi(l) x(t-\omega l)  dl. 
    \end{equation*}
    It remains to prove that the map $(\omega, x) \to	\partial_\omega G(\omega,x)=\int_0^\infty x(\cdot-\omega l)\mu_\tau(dl)$ is continuous from $\mathbb{R}\times C_{2\pi}(\mathbb{R})$ into $C_{2\pi}(\mathbb{R})$. To see this fix $(\omega_1,x_1)$ in $\mathbb{R}\times C_{2\pi}(\mathbb{R})$ and observe that for all $(\omega,x)\in \mathbb{R}\times C_{2\pi}(\mathbb{R})$ one has:
    \begin{equation*}
	\begin{array}{l}
		\partial_\omega G(\omega, x)-\partial_\omega G(\omega_1, x_1)
		=\int_0^\infty\left[x(\cdot-\omega l)-x_1(\cdot-\omega_1 l)\right] \mu_\tau(dl)={\rm J_1+J_2},
		\end{array}
    \end{equation*}
		wherein we have set
		\begin{equation*}
		{\rm J_1}:=\int_0^\infty\left[x(\cdot-\omega l)-x_1(\cdot-\omega l)\right] \mu_\tau(dl),\;{\rm J_2}:=\int_0^\infty\left[x_1(\cdot-\omega l)-x_1(\cdot-\omega_1l)\right]\mu_\tau(dl).
    \end{equation*}
    We first observe that 
    \begin{equation*}
	\|{\rm J_1}\|_{C_{2\pi}(\mathbb{R})}\le\|x-x_1\|_{C_{2\pi}(\mathbb{R})}|\mu_\tau|((0, \infty))\to 0\text{ uniformy for $\omega\in\R$ as $\|x-x_1\|_{C_{2\pi}(\mathbb{R})}\to 0$}.
    \end{equation*} 
    On the other hand fix $\varepsilon>0$ and since one has
    $$
    |\mu_\tau|((M, \infty)):=|\mu_\tau|((0, \infty))-|\mu_\tau|((0, M]) \to 0 \text{ as } M \to \infty,
    $$
    choose $M>0$ large enough so that $2 \|x_1\|_{C_{2\pi}(\R)}  |\mu_\tau|((M, \infty))\leq\varepsilon$.
    With such a choice we split ${\rm J_2}$ as follows ${\rm J_2=I_1+I_2}$ with 
    \begin{equation*}
	{\rm I_1}:=\int_0^M\left[x_1(\cdot-\omega l)-x_1(\cdot-\omega_1 l)\right]\mu_\tau(dl),\;{\rm I_2}:=\int_M^\infty\left[x_1(\cdot-\omega l)-x_1(\cdot-\omega_1 l)\right]\mu_\tau(dl).
    \end{equation*}
    Hence we get
    \begin{equation*}
	\|{\rm I_2}\|_{C_{2\pi}(\R)}\leq 2 \|x_1\|_{C_{2\pi}(\R)}  |\mu_\tau|((M, \infty))\leq \varepsilon,
    \end{equation*}
    and, since $x_1$ is continuous and $2\pi$-periodic, it is uniformly continuous on $\R$. Thus for all $\omega$ is sufficiently close to $\omega_1$, we have 
    $$ 
    \|{\rm I_1}\|_{C_{2\pi}(\R)}\le  \sup_{0\leq l \leq M} \vert x_1(t-\omega l)-x_1(t-\omega_1l)  \vert |\mu_\tau|((0, M])\leq\varepsilon. 
    $$
    As a consequence $\partial_\omega G$ is continuous and the proof is completed.    
    \end{proof}
\noindent \textbf{Computation of the derivatives of $F$: } One can calculate the following derivatives directly,
	\begin{equation}\label{4.5}
\partial_x	F (\omega, \nu, \overline{w}_{\nu})(x)(t) = x(t) - \partial_w f (\nu, \overline{w}_{\nu}) \int_0^\infty \chi(l) x(t-\omega l)dl, 
	\end{equation}
	\begin{equation}\label{4.6}
\partial_\nu  \partial_x F (\omega, \nu, \overline{w}_{\nu}) (x) (t) = -\left[ \partial_\nu \partial_w f(\nu, \overline{w}_{\nu})+\partial_w ^2 f (\nu, \overline{w}_{\nu}) \partial_\nu \overline{w}_{\nu} \right] \int_0^\infty \chi(l) x(t-\omega l) dl, 
	\end{equation}
and by Lemma \ref{LE4.2} 
\begin{equation}\label{4.7}
	\partial_\omega \partial_x 	F (\omega, \nu, \overline{w}_{\nu}) (x) (t) = -\partial_w f (\nu, \overline{w}_{\nu}) \int_0^\infty x(t-\omega l)\mu_\tau(dl). 
\end{equation}
\noindent \textbf{State space decomposition:} Note that by Assumption \ref{ASS3.7}-(i) we have 
\begin{equation*}
	1=\partial_w f (\nu_0, \overline{w}_{\nu_0}) \int_0^\infty\chi(l)e^{\pm i\,l}dl,
	\end{equation*}
which re-writes as
\begin{equation}\label{4.8}
    1=\partial_w f (\nu_0, \overline{w}_{\nu_0}) \int_0^\infty\chi(l) \cos l dl \text{ and }0=\partial_w f (\nu_0, \overline{w}_{\nu_0}) \int_0^\infty\chi(l) \sin l dl,
\end{equation}
and thus the kernel $N(\partial_ x F (1, \nu_0, \overline{w}_{\nu_0}))$ contains the space $X_1$ given by
	\begin{equation*} 
	X_1: = \text{span} \{\cos,\sin\}\subset C_{2\pi}(\mathbb{R}). 		
	\end{equation*} 
	Now define the closed space 
\begin{equation*}
	X_2:=\bigg\{z\in C_{2\pi}(\mathbb{R}): \int_0^{2\pi}z(t)e^{i\,t}dt=0  \bigg\}=\bigg\{z\in C_{2\pi}(\mathbb{R}): \int_0^{2\pi}z(t)\cos(t)dt=\int_0^{2\pi}z(t)\sin(t)dt=0  \bigg\}. 	
\end{equation*}
This space turns out to be a complement of $X_1$ as stated in the next lemma.
\begin{lemma}\label{LE4.3} We have the following state space decomposition 
	\begin{equation*}
	 C_{2\pi}(\mathbb{R}) = X_1 \oplus X_2.
	 \end{equation*}
 \end{lemma}

\begin{proof}
This property is directly inherited from the decomposition of $L^2((0, 2\pi), \mathbb{R})$ as
\begin{equation*}
	L^2((0, 2\pi), \mathbb{R})=X_1\oplus X_1^\perp,
\end{equation*} 
with 
\begin{equation*} 
	 X_1^\perp=\bigg\{z\in L^2((0, 2\pi), \mathbb{R}): \int_0^{2\pi}z(t)e^{i\,t}dt=0  \bigg\},
\end{equation*}
Now if $z \in C_{2\pi}(\mathbb{R})$ then $z \in L^2((0, 2\pi), \mathbb{R})$ and the above $L^2((0, 2\pi), \mathbb{R})-$decomposition ensures that there exist unique $z_1 \in X_1$ and $z_2 \in X_1^\perp$ such that 
	$$
	z=z_1+z_2.
	$$
Now since $z_1 =c_1 \cos + c_2 \sin$ (for some constants $c_1$ and $c_2$) this ensures that $z_2=z-z_1$ is also continuous and $z_2 \in X_2$. The state space decomposition follows.
\end{proof}

	Now let us define the map $h:\mathbb{R}^3\times X_2\to C_{2\pi}(\mathbb{R})$ by
	\begin{equation*}
	h(s, \omega, \nu, z)=\begin{cases}
	s^{-1} F (\omega, \nu, \overline{w}_\nu+s(u_1+z)),\quad &\text{if}\,s\neq0,\\
	\partial_x F (\omega, \nu, \overline{w}_\nu) (u_1+z),\quad &\text{if}\, s=0,
	\end{cases}
	\end{equation*}
	where 
	$$
	u_1(t)=\cos(t),\;\forall t\in\R.
	$$ 
	Now let us observe that since $f=f(\nu,x)$ is of class $C^2$, $h$ is of class $C^1$. One also has $h(0, 1, \nu_0, 0)=0$ while the derivative with respect to $(\omega, \nu, z)$ is given, for all $(\tilde{\omega}, \tilde{\nu}, \tilde{z})\in\R\times\R\times C_{2\pi}(\mathbb{R})$, by
	\begin{equation*}
		\begin{array}{l}
		D_{(\omega, \nu, z)} h(0, 1, \nu_0, 0) (\tilde{\omega}, \tilde{\nu}, \tilde{z}) 
		
		=\partial_x F (1, \nu_0, \overline{w}_{\nu_0}) \tilde{z}  + \tilde{\nu}  \partial_\nu  \partial_x F (1, \nu_0, \overline{w}_{\nu_0}) u_1+ \tilde{\omega} \partial_\omega  \partial_x F (1, \nu_0, \overline{w}_{\nu_0}) u_1,

		\end{array}
	\end{equation*}
hence by using \eqref{4.5}-\eqref{4.7} we obtain 
	\begin{equation*}
	\begin{array}{ll}
			D_{(\omega, \nu, z)} h(0, 1, \nu_0, 0) (\tilde{\omega}, \tilde{\nu}, \tilde{z}) =
	&	\tilde{z}(t) - \partial_w f (\nu_0, \overline{w}_{\nu_0}) \int_0^\infty\chi(l)\tilde{z}(t-l)dl \vspace{0.2cm} \\
		
	&	-\tilde{\nu}\left[ \partial_\nu \partial_w f (\nu_0, \overline{w}_{\nu_0}) + \partial_w ^2 f (\nu_0, \overline{w}_{\nu_0}) \partial_\nu \overline{w}_{\nu_0} \right]\int_0^\infty\chi(l)u_1(t-l)dl  \vspace{0.2cm}\\
	
		&	- \tilde{\omega} \partial_w f(\nu_0, \overline{w}_{\nu_0}) \int_0^\infty u_1(t-l)\mu_\tau(dl) .
	\end{array}
\end{equation*}
The second main step of the proof is the following lemma. 	
	\begin{lemma} \label{LE4.4}
		The bounded linear operator 
		$$ 
		D_{(\omega, \nu, z)}h(0, 1, \nu_0, 0)\in\mathcal{L}(\mathbb{R}^2\times X_2, C_{2\pi}(\mathbb{R})) 
		$$
		is invertible. 
	\end{lemma}

	\begin{proof}
		Let us first define the linear bounded operator $K: X_2\to C_{2\pi}(\mathbb{R})$ by 
		$$ 
		K\phi:=\partial_w f (\nu_0, \overline{w}_{\nu_0}) \int_0^\infty\chi(l)\phi(\cdot-l)dl. 
		$$
\noindent \textbf{Step 1: Let us prove  that $K(X_2) \subset X_2$.}	 Indeed, by using Fubini's theorem, for all $\phi\in C_{2\pi}(\mathbb{R})$ one has 
		$$
		\int_0^{2\pi}\int_0^\infty\chi(l)\phi(t-l)dl e^{i\,t}dt= \int_0^\infty	\int_0^{2\pi} \phi(t-l) e^{i\,(t-l)}dt  e^{i \, l} \chi(l) dl
		$$
		and since $t \to \phi(t) e^{i \, t}$ is $2 \pi-$periodic we deduce that  
		$$
		\int_0^{2\pi} \phi(t-l) e^{i\,(t-l)}dt =0, \forall l \geq 0.  
		$$
		This completes the first step.
		
\noindent \textbf{Step 2: Let us now prove that $N(I-K)=\{0\}$ whenever $I-K \in \mathcal{L}(X_2)$.} In order to compute the kernel of $I-K$ in $X_2$, consider $g\in N(I-K)$, that is $g\in X_2$ such that  
		
		\begin{equation*}
		g(t)-\partial_w f (\nu_0, \overline{w}_{\nu_0}) \int_0^\infty\chi(l)g(t-l)dl=0,\;\forall t\in \R.
		\end{equation*}		
	Multiplying the above equality by $ e^{-int}$, for some $n\in \mathbb Z$ and integrating between $0$ and $2 \pi$ we obtain   	
		\begin{equation*}
		\left[1-\partial_w f (\nu_0, \overline{w}_{\nu_0}) \widehat{\chi}(n)\right]\widehat{g}(n)=0,
	\end{equation*}
	wherein we have set
	$$ 
	\widehat{g}(n) := \int_{0}^{2\pi} g(l) e^{-inl} dl,
	$$
	and 
\begin{equation*}
	\widehat{\chi}(n) : = \int_0^\infty \chi(l) e^{-inl} dl. 
\end{equation*}
Now for $n\neq\pm 1$ we deduce by Assumption \ref{ASS3.7}-(ii) that 
	$$ 
	\widehat{g}(n) = \left[1-\partial_w f (\nu_0, \overline{w}_{\nu_0}) \widehat{\chi}(n)\right]^{-1}0=0. 
	$$
Since $g\in X_2$, it follows that $g=0$ and $N(I-K)=\{0\}$. 

\medskip 
\noindent \textbf{Step 3: Let us prove that $I-K \in \mathcal{L}(X_2) $ is invertible.} Next note that it follows from the continuity of the translation in $L^1$ that $K$ is a compact operator. Thus one has $R(I-K)=X_2$ by Fredholm Alternative (see \cite[Lemma 4.3.17]{magal2018theory}), where $R(I-K)$ denotes the range of $I-K$. Hence we have that $I-K$ is invertible and that the inverse is continuous by bounded inverse theorem. 
		
\medskip 		
\noindent \textbf{Step 4: Let us prove that $D_{(\omega, \nu, z)}h(0, 1, \nu_0, 0)$ is invertible.}			To prove this, let $y\in C_{2\pi}(\mathbb{R})$ be given. Set $L:=D_{(\omega, \nu, z)}h(0, 1, \nu_0, 0)$ and let us consider the equation 
\begin{equation}\label{4.9}
\left(\tilde{\omega}, \tilde{\nu}, \tilde{z}\right)\in \R\times\R\times X_2,\;\;	L(\tilde{\omega}, \tilde{\nu}, \tilde{z})=y.
\end{equation} 
Define the projectors $P_1: C_{2\pi}(\mathbb{R})\to X_1$ and $P_2: C_{2\pi}(\mathbb{R})\to X_2$ associated to the state space decomposition of Lemma \ref{LE4.3} and set $y_1:=P_1y$ and $y_2:=P_2y$. Next projecting \eqref{4.9} along $P_1$ and $P_2$ yields the following system, for all $t\in\R$,

\begin{equation}\label{4.10}
	\begin{array}{ll}
		y_1(t) =& -\tilde{\nu} \left[\partial_\nu \partial_w  f (\nu_0, \overline{w}_{\nu_0}) + \partial_w ^2 f (\nu_0, \overline{w}_{\nu_0}) \partial_\nu \overline{w} _{\nu_0} \right] \int_0^\infty \chi(l) u_1(t-l) dl  \vspace{0.1cm}\\
		&-\tilde{\omega} \partial_w f (\nu_0, \overline{w}_{\nu_0}) \int_0^\infty u_1(t-l) \mu_\tau(dl),
	\end{array}
\end{equation} 
and 
\begin{equation*}
	\begin{array}{ll}
		y_2(t) =& \tilde{z}_2(t)-\partial_w f (\nu_0, \overline{w}_{\nu_0}) \int_0^\infty \chi(l) \tilde{z}_2(t-l)dl.\nonumber
	\end{array}
\end{equation*} 
\medskip 
\noindent Observe that $I-K$ is invertible in $X_2$, thus $\tilde{z}_2$ can be solved by 
$$ 
\tilde{z}_2=(I-K)^{-1}y_2. 
$$

Let us focus on the resolution of \eqref{4.10}. To that aim, recall that $u_1=\cos(\cdot)$ and \eqref{4.8} so that \eqref{4.10} rewrites

		\begin{equation*}
		\begin{array}{ll}
		y_1(t) =& -\tilde{\nu} \left[\partial_\nu \partial_w  f (\nu_0, \overline{w}_{\nu_0}) + \partial_w ^2 f (\nu_0, \overline{w}_{\nu_0}) \partial_\nu \overline{w} _{\nu_0} \right] \int_0^\infty \chi(l) \cos l dl \cos t \vspace{0.1cm}\\
		&-\tilde{\omega} \partial_w f (\nu_0, \overline{w}_{\nu_0}) \left[ \int_0^\infty \cos l \mu_\tau(dl) \cos t + \int_0^\infty \sin l \mu_\tau(dl) \sin t \right],
		\end{array}
		\end{equation*} 
		Furthermore by applying the integration by parts formula \eqref{3.1},  we obtain    
		$$
		\begin{array}{l}
		 \int_0^\infty \cos l \mu_\tau (dl)=\lim_{M \to \infty}  \int_0^M \cos l \mu_\tau (dl)\\
		 = \lim_{M \to \infty} \left\{  \left[ \cos M \tau(M) - \cos 0 \tau(0) \right]+\int_0^M \tau(l) \sin l dl \right\},
		 \end{array}
		$$
		hence 
		$$
		 \int_0^\infty \cos(l)\, \mu_\tau (dl)= \int_0^\infty l \chi(l) \sin(l) dl.
		$$
		Similarly, one can obtain
		$$
		\int_0^\infty \sin l \mu_\tau (dl)= - \int_0^\infty l \chi(l) \cos l  dl.
		$$
		On the other hand, since $y_1\in X_1$, there exist two constants $c_1,c_2\in\R$ such that $y_1=c_1\cos(\cdot)+c_2\sin(\cdot)$, while  the $y_1$-equation can be rewritten as the following system, for all $t\in\R$,
		\begin{equation*}
		\begin{array}{ll}
	     &c_1 \cos t + c_2 \sin t \vspace{0.1cm}\\
	     =& -\tilde{\nu} \left[ \partial_\nu \partial_w f (\nu_0, \overline{w}_{\nu_0}) + \partial_w ^2 f (\nu_0, \overline{w}_{\nu_0}) \partial_\nu \overline{w}_{\nu_0} \right]  \int_0^\infty \chi(l) \cos l dl \cos t \vspace{0.1cm}\\ 
	     &- \tilde{\omega}  \partial_w  f  (\nu_0, \overline{w}_{\nu_0}) \left[ \int_0^\infty l \chi(l) \sin l dl \cos t- \int_0^\infty l  \chi(l) \cos l dl \sin t \right],
		\end{array}
		\end{equation*}
		and identifying the coefficients of $\cos(\cdot)$ and $\sin(\cdot)$ we end-up with the resolution of the following two-dimensional linear system
		\begin{equation*}
		\begin{pmatrix}-\left [ \partial_\nu \partial_w f (\nu_0, \overline{w}_{\nu_0}) + \partial_w ^2 f (\nu_0, \overline{w}_{\nu_0}) \partial_\nu \overline{w}_{\nu_0} \right] \int_0^\infty \chi(l) \cos l dl & -\partial_w f (\nu_0, \overline{w}_{\nu_0}) \int_0^\infty l \chi (l) \sin l dl \vspace{0.1cm}\\

		0 &
		\partial_w f (\nu_0, \overline{w}_{\nu_0}) \int_0^\infty l \chi(l) \cos l dl
		\end{pmatrix}
		\begin{pmatrix} 
		\tilde{\nu}\\\tilde{\omega}
		\end{pmatrix}
		=\begin{pmatrix}
	    c_1 \\ c_2
		\end{pmatrix},
		\end{equation*} 

		To solve this linear system, let us show that our assumptions for the characteristic equation , namely Assumption \ref{ASS3.7}, ensures that the determinant of the above matrix is non-zero, that reads as
		\begin{equation}\label{4.11}
		-\partial_w f (\nu_0, \overline{w}_{\nu_0}) \left[ \partial_\nu \partial_w f (\nu_0, \overline{w}_{\nu_0}) + \partial_w ^2 f (\nu_0, \overline{w}_{\nu_0}) \partial_\nu \overline{w}_{\nu_0} \right] \int_0^\infty \chi(l) \cos l dl \int_0^\infty l \chi(l) \cos l dl \neq 0.
		\end{equation}
		To check this property, recalling \eqref{4.8}, it is sufficient to check that
			\begin{equation*}
		\left[\partial_\nu \partial_w f (\nu_0, \overline{w}_{\nu_0}) + \partial_w^2 f (\nu_0, \overline{w}_{\nu_0})     \partial_\nu \overline{w}_{\nu_0} \right] \int_0^\infty l \chi(l) \cos(l) dl \neq 0.
		\end{equation*}
		Next set for $\theta\in\R$,
		$$ 	 
		\widehat{\chi}(\theta) = \int_0^\infty \chi(l) e^{-i \theta l} dl, 
		$$ 
and observe that the above condition (or equivalently \eqref{4.11}) is equivalent to
\begin{equation}\label{4.12}
\left[\partial_\nu \partial_w f (\nu_0, \overline{w}_{\nu_0}) + \partial_w^2 f (\nu_0, \overline{w}_{\nu_0})     \partial_\nu \overline{w}_{\nu_0} \right] {\rm Im}\,\widehat{\chi}'(1) \neq 0
\end{equation}
		On the other hand, by differentiating the characteristic equation \eqref{3.2} with respect to $\nu$ at $\nu=\nu_0$ (and recalling that $\omega_0=1$), we have
		
		\begin{equation*}
		\frac{d\lambda(\nu_0)}{d\nu} \partial_w f (\nu_0, \overline{w}_{\nu_0}) \int_0^\infty l \chi(l) e^{-i\,l} dl = \left[ \partial_\nu \partial_w f (\nu_0, \overline{w}_{\nu_0}) + \partial_w ^2 f (\nu_0, \overline{w}_{\nu_0}) \partial_\nu \overline{w}_{\nu_0} \right] \widehat{\chi}(1),
		\end{equation*}

which implies

\begin{equation*}
	\frac{d\lambda(\nu_0)}{d\nu} = \frac{ \left[ \partial_\nu \partial_w f (\nu_0, \overline{w}_{\nu_0}) + \partial_w ^2 f(\nu_0, \overline{w}_{\nu_0}) \partial_\nu \overline{w}_{\nu_0} \right] \int_0^\infty \chi(l) e^{-i l} dl} { \partial_w f (\nu_0, \overline{w}_{\nu_0}) \int_0^\infty l \chi(l) e^{-il} dl}.
\end{equation*}
Thus the transversality condition, that is Assumption \ref{ASS3.7}-(iv), becomes

\begin{equation*}
	\begin{array}{l}
		{\rm Re}  \dfrac{d\lambda(\nu_0)}{d\nu} = {\rm Re} \left\{  \dfrac{\left[\partial_\nu \partial_w f (\nu_0, \overline{w}_{\nu_0} ) + \partial_w ^2 f (\nu_0, \overline{w}_{\nu_0}) \partial_\nu \overline{w}_{\nu_0} \right] \widehat{\chi}(1)}  {i  \partial_w  f  (\nu_0, \overline{w}_{\nu_0})  \widehat{\chi}' (1) }\right\} \vspace{0.2cm} \\

		={\rm Re}\left\{ \left[\partial_\nu \partial_w f (\nu_0, \overline{w}_{\nu_0}) + \partial_w ^2 f (\nu_0, \overline{w}_{\nu_0})  \partial_\nu  \overline{w}_{\nu_0} \right] \underbrace{ \widehat{\chi}  (1)  \partial_w  f  (\nu_0, \overline{w}_{\nu_0})}_{=1}  \overline{i  \widehat{\chi}' (1)  } \right\}\vspace{0.2cm} \\

		=\left[\partial_\nu \partial_w f  (\nu_0, \overline{w}_{\nu_0})  +  \partial_w ^2  f  (\nu_0, \overline{w}_{\nu_0})  \partial_\nu \overline{w}_{\nu_0} \right] {\rm Im}\,  \left(\widehat{\chi}' (1) \right) \neq 0,
	\end{array}		
\end{equation*}
therefore \eqref{4.12} holds true and we can find a unique $\left(\tilde{ \nu}, \tilde{\omega}\right)\in\R^2$ solving the above two-dimensional linear system.  This completes the proof of the lemma. 

\end{proof}

\noindent \textbf{Last part of the proof of Theorem \ref{TH4.1}: } To conclude the proof of the Hopf bifurcation Theorem \ref{TH4.1}, we apply the implicit function theorem (see Deimling \cite[Theorem 15.2]{deimling2010nonlinear}) to the function $h: \mathbb{R}^3\times X_2\rightarrow C_{2\pi}(\mathbb{R})$ and we deduce that there exists a $C^1-$mapping $(\omega, \nu, z):(-\delta, \delta)\rightarrow\mathbb{R}^2\times X_2$, for some $\delta>0$ small enough, such that
$$ 
h(s, \omega(s), \nu(s), z(s))=0, \quad \forall s\in(-\delta, \delta). 
$$
By the definition of $h$, this is equivalent to say that
$$ 
F(\omega(s), \nu(s), \overline{w}_{\nu(s)}+s(u_1+z(s)))=0, 
$$
when $s\neq0$ with $(\omega(0), \nu(0), z(0))=(1, \nu_0, \overline{w}_{\nu_0})$. We see that $(\omega(s), \nu(s), \overline{w}_{\nu(s)}+s(u_1+z(s)))$ is the desired curve of solutions of $F=0$. Thus the proof is complete.

\end{proof}

\end{document}